\documentclass[a4paper]{article}
\usepackage{latexsym,amsfonts,amscd,amssymb,enumerate,amstext,amsmath,dsfont}
\usepackage[sc,tiny,center,compact]{titlesec}
\usepackage{amsthm}
\usepackage[all]{xy}
\usepackage{titletoc}
\titlecontents{section}[0pt]{\small\scshape}{\contentslabel{1.5em}}{}{\titlerule*[0.75pc]{.}\contentspage}[\textbullet\ ]

\titlecontents{subsection}[1.5em]{\small\scshape}{\contentslabel{2.3em}}{}{\titlerule*[0.75pc]{.}\contentspage}[\textbullet\ ]
\titlecontents{subsubsection}[3.8em]{\small\scshape}{\contentslabel{3.0em}}{}{\titlerule*[0.75pc]{.}\contentspage}[\textbullet\ ]

\author{M. Grime \\ University of Bristol\\ e-mail: Matt.Grime@bris.ac.uk}

\title{Precovers, localizations and stable homotopy}
\date{August 2007}
\newtheoremstyle{ordinary}{1ex}{0pt}{}{}{\scshape}{.}{\newline}{}
\theoremstyle{ordinary}
\newtheorem{thm}{Theorem}[section]
\newtheorem{defn}[thm]{Definition}
\newtheorem{lem}[thm]{Lemma}
\newtheorem{cor}[thm]{Corollary}
\newtheorem{exmp}[thm]{Example}


\newcommand{\colimit}{\varinjlim}



\newcommand{\catt}{\mathcal{T}}

\newcommand{\catr}{\mathcal{R}}
\newcommand{\cats}{\mathcal{S}}
\newcommand{\cate}{\mathcal{E}}

\newcommand{\catp}{\mathcal{P}}

\renewcommand{\mod}{\mathrm{mod}(kG)}
\newcommand{\Mod}{\mathrm{Mod}(kG)}

\begin{document}
\maketitle

\begin{abstract}
We prove a new localization theorem for stable model categories when the localizing subcategory is generated by a precovering class in the model category. We use this to show how one may explicitly realize certain Bousfield localization functors that arise naturally in the study of relative homological algebra for group algebras.

2000 Mathematics subject classification: 18E10, 18E30,18E35, 18G25, 20J05, 55U35.
\end{abstract}

\section{Introduction}

The results of this paper were originally formulated in order to provide an explicit description of certain Bousfield localization functors in modular representation theory. I am indebted to Peter J{\o}rgensen for correcting an error in the original, and for persuading me that I should rewrite it in the far more general terms of stable homotopy theory and model categories. The original motivation will appear as an application of the main theorem.

Let  $\catt$ be the triangulated quotient of a Frobenius category $\cate$, and suppose that $\catr$  is a precovering class of objects in $\cate$. Consider the localizing subcategory $\langle\catr\rangle^\oplus\subset \catt$. Under the assumption that $\cate$ possesses enough kernels and that  $\catr$ satisfies some mild conditions, one obtains an adjoint to the inclusion of this localizing subcategory, and, moreover, an explicit construction of the adjoint by relating the model structure to the triangulated structure. Although this might seem a little restrictive -- one might wish instead to work solely with a triangulated category without presuming a model structure -- it is frequently the situation that one meets in real life. 

Given $\catt$ and $\catr$ as above we seek a triangle

\[ X_\catr \to X \to X_{\catr^\perp} \to \]
with $X_\catr \in \langle\catr\rangle^\oplus$, and $(\catr,X_{\catr^\perp})_\catt =0$. Let us call such an object the localization triangle of $X$. Let us briefly explain where our assumptions on $\catr$ come from. We  know that  given any object $X$ in $\cate$ there is, by assumption, a precover in $\cate$

\[ R_X \to X\]
and  that in $\cate$ every map from an object in $\catr$ to $X$ factors through $R_X$. Thus if we pass to $\catt$ and complete this map to a triangle 
\[ R_X \to X\to Y \to \]
we would have a plausible candidate for the desired localization triangle in $\catt$. For this to be the triangle we seek, one would need every map  from an object in $\catr$  to factor \emph{uniquely} through the map $R_X\to X$ in $\catt$. We know we may lift to the model category to obtain \emph{a} factorization, but we also know that it will be far from unique. To correct our na\"{\i}ve initial guess, we should make an adjustment to kill these extra maps. This is where we shall assume we have enough kernels. Suppose that the map 

\[R_X \to X\]
has a kernel $K$ in $\cate$ and that $K$ is in $\catr$. Then passing back to $\catt$ one can complete the map $K \to R_X$ to a triangle
\[K\to R_X \to X_\catr \to. \]
Since the composite 
\[K \to R_X \to X\]
is zero, we can complete 
\[  \xymatrix@=15pt{ K\ar[r]\ar[d] & R_X \ar[r]\ar[d]& X_\catr \ar[r] & \\
0\ar[r]&X \ar[r]^{\mathrm{Id}}& X\ar[r] & }\]
to a morphism of triangles, whence we deduce a map 
\[ X_\catr \to  X \]
which we can complete to a triangle
\[ X_\catr\to X \to X_{\catr^\perp}\to. \]
This is the localization triangle we seek. The astute reader will have spotted that we have assumed $K$ is in $\catr$, when it need not be, and in general has no reason to be. However, if $X$ has a finite $\catr$-dimension, then $K$ has dimension one smaller, and this draws the reader to looking for some inductive argument to side-step this issue. We shall present such an argument in theorem \ref{main}. 

We use this construction to show that if $\cats\subseteq\catt$ is a localizing subcategory, and if $\cats=\langle \catr \rangle^\oplus$ for some precovering class, then when the quotient 

\[j\colon \catt \to \catt/\cats\]
exists there is a right adjoint $j_!$..

The application that originally motivated this construction comes from representation theory. Suppose that $R$ is a group algebra, and that $\cate$ is given by some exact structure on $\mathrm{Mod}(R)$ determined by a relative cohomology theory. In general, the finite dimensional objects now compactly generate a proper localizing subcategory, $\cats\subset\catt$, which is obviously the smallest triangulated subcategory containing the pure projective modules. In order to investigate these categories, the author devised this technique to better understand the idempotent modules one obtains from Bousfield localization.

\section{Triangulated categories}
\subsection{Exact categories and resolutions}
Throughout we shall use $\cate$ to denote a Frobenius category which we remind the reader is an exact category with enough projectives and injectives, and for which these two subclasses coincide. We will use $\catp$ for the class of pro/injective objects.  Thus there is a sequence of categories

\[\catp \to \cate \to \frac{\cate}{\catp} =:\catt\]
and $\catt$ is triangulated. We refer the reader to the magnum opus \cite{hovey} and \cite{happel} for readers unfamiliar with exact categories and triangulated quotients.

We shall further suppose that $\cate$ has a contravariantly finite, or precovering class, $\catr$, and that $\cate$ has \emph{enough} kernels. We shall explain what we mean by `enough' momentarily. The reader should use the mnemonic ``we want hypotheses such that $\catr$ can be used to define (left) $\catr$esolutions of objects in $\cate$." To flesh out our hypothesis: to each object $X$ in $\cate$ we can associate (not necessarily uniquely) an object $R_X$, called a precover,  and morphism

\[  R_X\to X\]
with the property that any map from $R\in \catr$ to $X$ factors through $R_X$ in $\cate$. We now explain what the assumption of enough kernels means. We wish to assume that every precover has a kernel. Let $K$ be the kernel of $R_X\to X$. We can now take a precover of $K$, take its kernel, and so on to create an $\catr$-resolution of $X$. The reader should note that we are not assuming that the triple

\[K_{i+1} \to R_i \to K_i\]
is a conflation in $\cate$. In fact if they were all conflations, and $X$ had finite $\catr$-dimension then $X$ would lie in $\langle \catr \rangle^\oplus$, and we would not have to work very hard to deduce what $X_\catr$ is in this case.

It is worth inserting some examples at this point to illustrate the kinds of areas where we wish to apply our ideas.

\begin{exmp} We give two examples using module categories.

\begin{itemize}
\item Let $\cate$ be $\mod$ of a finite group algebra. Pick $H$ some subgroup of $G$ and let $\catr$ be the class of $H$-projective modules. A precover of $X$ is 
\[ \mathrm{Ind}^G_H(\mathrm{Res}_H^G(X)) \to X\]
with the map coming from the counit of the adjunction. Note $\cate$ is abelian so we have enough kernels.

\item Let $\cate$ be $\mathrm{Mod}(A)$ for some frobenius algebra $A$, and let $\catr$ be the class of pure projective modules. If $M$ is written as the direct limit of its finite dimensional submodules 

\[M:= \colimit M_\alpha\]
then a precover is given by the natural map
\[ \coprod_{\alpha \in A} M_\alpha \to M.\]
\end{itemize}
\end{exmp}

We need on more definition. 

\begin{defn}
Let $X$ be in $\cate$, and suppose that 

\[\xymatrix@=15pt{  {\ldots} \ar[r] & R_2 \ar[r] \ar[d] & R_1\ar[r]\ar[d] &  R_0 \ar[r] \ar[d] & X \\
                                            & K_2 \ar[ur]          & K_1\ar[ur]        &     X\ar@{=}[ur]   } \]
is an $\catr$-resolution. If $K_d$ is in $\catr$ for some $d$, then we say $X$ has finite $\catr$-dimension, and if $d$ is minimal with this property we say $X$ has $\catr$-dimension $d$. If no such $d$ exists then $X$ has infinite $\catr$-dimension. We will write $\mathrm{dim}_\catr(X)=d$ to indicate the $\catr$-dimension.
                                            
\end{defn}

\subsection{Finite resolutions}

We continue with the hypotheses of the previous subsection. Let $X$ be any object in $\cate$, and suppose further that the $\catr$-dimension of $X$ is finite.  
We know that there is a diagram with all morphisms and objects considered in $\cate$:

\[ \xymatrix@=15pt{  R_d \ar[r] \ar@{=}[d]& R_{d-1} \ar[r] \ar[d] & &{\cdots} & \ar[r]  & R_{1} \ar[r] \ar[d] & R_0\ar[d] \ar[r]        & X \\
                       K_{d}\ar[ur]                           & K_{d-1}  \ar[ur]                             & & {\cdots} &\ar[ur] &  K_1 \ar[ur]           &   K_{0}\ar@{=}[ur] &        }\]
and in each subdiagram 

\[ K_{i+1} \to R_i \to K_{i} \]
$R_i$ is a  precover of  $K_{i}$, and $K_{i+1}$ is its kernel.

We now pass to the quotient $\catt$ and construct a \emph{homotopy approximation} of the resolution in $\catt$ starting from the left by defining a sequence of maps  $\epsilon_i$, and objects $L_i$ in $\catt$. First, we define $\epsilon_d$ to be the map $R_d\to R_{d-1}$. Now we will construct  $\epsilon_{d-1}$.
By assumption the composite 
\[R_d\to R_{d-1}\to K_{d-1}\]
 is zero,  so there is a diagram 

\[ \xymatrix@=15pt{R_d \ar[r] \ar[d] & R_{d-1} \ar[r] \ar[d] &  L_{d-1} \ar[r]& \\
                    0\ar[r]                 & K_{d-1} \ar@{=}[r]   & K_{d-1} \ar[r]&  }\]
in which the rows are distinguished triangles in $\catt$. We complete to a morphism of triangles
\[ \xymatrix@=15pt{R_d \ar[r] \ar[d] & R_{d-1} \ar[r] \ar[d] & L_{d-1} \ar[d] \ar[r]& \\
                    0\ar[r]                  & K_{d-1} \ar@{=}[r]                        & K_{d-1}  \ar[r] &}\]
                    and define $\epsilon_{d-1}$ to be the composite

\[\xymatrix@=15pt{ L_{d-1} \ar[r] & K_{d-1} \ar[r] & R_{d-2}}.\]
Define $L_{d-2}$ to be the cone of $L_{d-1}\to R_{d-2}$. We note that by construction the composite $L_{d-1}\to R_{d-2}\to K_{d-2}$ is also zero, so we obtain another morphism of triangles

\[ \xymatrix@=15pt{ L_{d-1} \ar[r]^{\epsilon_{d-1}} \ar[d] & R_{d-2} \ar[r] \ar[d] &  L_{d-2}\ar[d] \ar[r]& \\
                    0\ar[r]                  & K_{d-2} \ar@{=}[r]                        & K_{d-2}  \ar[r] &}\]
Now we define $\epsilon_{d-2}$ as the composite $L_{d-2}\to K_{d-2} \to R_{d-3}$. We can iterate this process and we will  end up with a diagram 
  \[ \xymatrix@=15pt{                                      & L_{d-1}\ar[dr]^{\epsilon_{d-1}} \ar@/^1pc/ @{.>}[dd]&  &             &           \ar[dr]& L_1 \ar[dr]^{\epsilon_1}\ar@/^1pc/ @{.>}[dd]  & L_0 \ar[dr]\ar@/^1pc/ @{.>}[dd]& \\
                       R_d \ar[r] \ar@{=}[d] & R_{d-1} \ar[r] \ar[d]  \ar[u]                     & &{\cdots} & \ar[r]  & R_{1} \ar[r] \ar[d] \ar[u] &  R_0\ar[d] \ar[r]\ar[u]  & X \\
                       K_{d}\ar[ur]                           & K_{d-1}  \ar[ur]                             & & {\cdots} &\ar[ur] &  K_1 \ar[ur]                    &   K_{0}\ar@{=}[ur] &        }\]
where all objects and morphisms are in $\catt$. 
                                                             
\noindent
\textsc{Remarks}                                                                                                             
\begin{enumerate}[(i)]
\item $L_0$ is unique, but, \emph{a priori}, not up to unique isomorphism.
\item It lies in the smallest subcategory that contains $R_i$ for $0\leq i \leq d$ and is closed under triangles. 
\item If all of the triples $K_i\to R_{i-1} \to K_{i-1}$ are isomorphic to distinguished triangles, then $L_0$ is stably  isomorphic to $X$.
\item if $X$ is the direct limit of a sequence of objects, $X_i$ in $\cate$, then $L_0$ is the homotopy colimit
\[ \mathrm{hocolim}(X_i)\]
in $\catt$.
\item We genuinely needed the model structure to make this construction: one can define precovers in a triangulated category, however, a triangulated category does not have any non-trivial kernels. The best one could hope for is a diagram
\[ \xymatrix@=15pt{  R_d \ar[r] & R_{d-1} \ar[r] & &{\cdots} & \ar[r]  & R_{1} \ar[r]  & R_0 \ar[r]        & X}\]
in which the composite of two consecutive morphisms was zero. Whilst one could certainly start the construction, one would quickly find that attempts to construct a map from $L_{d-2}$ to $R_{d-3}$ were doomed to failure. 
 \end{enumerate}

\subsection{The main theorem}
We are now in a position to state our first theorem. It asserts that under some mild, but slightly lengthy hypotheses, the object $L_0$ has the unique lifting property.

\begin{thm}\label{main}
Let $\cate$ be a Frobenius category with triangulated quotient $\catt$, and let $\catr$ be a class of objects in $\cate$. Suppose that the following hypotheses are satisfied:

\begin{enumerate}[(i)]
\item $\catr$ is a precovering class in $\cate$;
\item $\catr$  is closed under shifts in $\catt$; 
\item for all $R$ in $\catr$ the injective hull $I(R)$ is in $\catr$.
\end{enumerate}
 Further suppose that $X$ is an object in $\cate$ with $\catr$-dimension $d$ and suppose that  $L_0$ is constructed as above, then there is an isomorphism $(R,L_0)_\catt \cong(R,X)_\catt$ for all $R$ in $\catr$.
\end{thm}
We will break the proof down into a series of straightforward lemmas. First we deal with surjectivity.

\begin{lem}\label{lemone} The map $(R,L_0)_\catt \to (R,X)_\catt$ is surjective.
\end{lem}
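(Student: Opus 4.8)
The plan is to derive surjectivity straight from the defining property of the precover, using nothing more than the last morphism of triangles appearing in the construction of $L_0$; in particular hypotheses (ii) and (iii) of Theorem~\ref{main}, and any use of the finiteness of $\mathrm{dim}_\catr(X)$ beyond what is needed to form $L_0$ at all, play no role here. Write $\beta\colon L_0\to X$ for the structure map of the construction (the map $L_0\to K_0=X$ at the right-hand end of the big diagram); the map in the statement is then $\beta_*\colon(R,L_0)_\catt\to(R,X)_\catt$. The only feature of $\beta$ I shall use is that it completes the morphism of triangles
\[ \xymatrix@=15pt{ L_1 \ar[r]^{\epsilon_1} \ar[d] & R_0 \ar[r]^{\alpha} \ar[d]_{\pi} & L_0 \ar[d]^{\beta} \ar[r] & \\
0 \ar[r] & K_0 \ar@{=}[r] & K_0 \ar[r] & } \]
where $\alpha$ is the canonical map in the triangle exhibiting $L_0$ as $\cone(\epsilon_1\colon L_1\to R_0)$ and $\pi\colon R_0\to K_0=X$ is the precover map. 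Commutativity of the middle square gives $\beta\circ\alpha=\pi$; the square makes sense at all because, by construction, $\pi\circ\epsilon_1=0$ (the map $\epsilon_1$ factors through the kernel $K_1$ of $\pi$).

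Now fix $R\in\catr$ and $f\in(R,X)_\catt$. Since $\catt=\cate/\catp$, the canonical surjection $\Hom_\cate(R,X)\twoheadrightarrow(R,X)_\catt$ lets us choose a morphism $\tilde f\colon R\to X$ of $\cate$ lifting $f$. As $\pi$ is an $\catr$-precover and $R\in\catr$, we may factor $\tilde f=\pi\circ h$ in $\cate$ for some $h\colon R\to R_0$. Put $g:=\alpha\circ\bar h\colon R\to L_0$, where $\bar h$ is the image of $h$ in $\catt$ (and we use the same symbol $\pi$ for the precover map and its image in $\catt$). Then
\[ \beta_*(g)=\beta\circ\alpha\circ\bar h=\pi\circ\bar h=\overline{\tilde f}=f, \]
so $f$ lies in the image of $\beta_*$, which is the assertion of the lemma.

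I do not anticipate any obstacle in this lemma: surjectivity is essentially a repackaging of the precover property, together with the remark that $\beta$ transports that property along the triangle that builds $L_0$. The substance of Theorem~\ref{main} — and the reason hypotheses (ii) and (iii) are imposed, and why the $\catr$-resolution has to be built up in layers — lies entirely in the \emph{injectivity} of $\beta_*$, which the remaining lemmas address; that is where I would expect the real difficulty to be.
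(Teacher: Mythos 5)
Your argument is precisely the paper's proof: lift an arbitrary map $R\to X$ in $\catt$ to $\cate$, factor it through the precover $R_0\to X$, and then use the final morphism of triangles in the construction of $L_0$ to push the factorization through $L_0$. The extra remarks about which hypotheses are not needed here are accurate but do not change the substance; there is no gap.
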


\begin{proof}
Let $R\to X$ be any morphism in $\catt$. Lift arbitrarily to a morphism in $\cate$. This must factor through the precover $R_0 \to X$. Recall that the ultimate step in our construction yielded a morphism of triangles
\[ \xymatrix@=15pt{  L_1 \ar[r] \ar[d]& R_0 \ar[r] \ar[d] & L_0 \ar[r] \ar[d]&  \\
                         0 \ar[r] & X \ar@{=}[r] & X \ar[r] & }\]
from which we deduce that the lift $R\to R_0\to X$, when we pass back to $\catt$, factors as $R\to R_0 \to L_0\to X$. 
\end{proof}

Thus it remains to show that the map is an injection.  The proof of injectivity is more complicated. First, we will need a standard observation about how one may choose to factor maps in $\catt$.

\begin{lem}\label{lemtwo} With the hypotheses (i) and (iii) of theorem, \ref{main}. Let $R$ be in $\catr$ and  consider a map $\alpha:R\to X$ in $\cate$. By assumption, this factors  in $\cate$ as 
\[ \xymatrix@=15pt{ R\ar[r]^\beta & R_0\ar[r]^\gamma& X}\]
 since $R_0$ is a precover. Suppose that $\alpha$ is $0$ in the quotient $\catt$, then there is a map in $\delta:R\to R_0$ in $\cate$ such that:  

\begin{enumerate}
\item the composite  $\gamma\delta$ is zero in $\cate$;
\item $\delta = \beta$ in $\catt$.
\end{enumerate}
\end{lem}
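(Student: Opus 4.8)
The plan is to exploit hypothesis (iii) — that injective hulls of objects of $\catr$ stay in $\catr$ — together with the definition of the triangulated quotient: a map $\alpha\colon R\to X$ is zero in $\catt$ precisely when it factors through a projective-injective object in $\cate$. First I would write down this factorization: since $\alpha = 0$ in $\catt$, there is a pro/injective $P$ and maps $R\xrightarrow{u} P\xrightarrow{v} X$ in $\cate$ with $\alpha = vu$. The natural first move is to replace $P$ by the injective hull $I(R)$: the monomorphism $R\hookrightarrow I(R)$ (part of the Frobenius structure) together with injectivity of $P$ lets us extend $u$ over $I(R)$, so without loss of generality $R\xrightarrow{\iota} I(R)\xrightarrow{w} X$ factors $\alpha$, where $\iota$ is the injective hull inclusion. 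By hypothesis (iii), $I(R)\in\catr$.

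Next I would feed this through the precover property. Since $I(R)\in\catr$, the map $w\colon I(R)\to X$ factors through the precover $\gamma\colon R_0\to X$, say $w = \gamma\sigma$ for some $\sigma\colon I(R)\to R_0$ in $\cate$. Now set $\delta := \sigma\iota - \beta\colon R\to R_0$... — actually the cleaner choice is to observe we want a map that agrees with $\beta$ in $\catt$ but dies after composing with $\gamma$ in $\cate$. Put $\delta := \beta - \sigma\iota$ is wrong in sign conventions; rather, note $\gamma(\sigma\iota) = w\iota = \alpha = \gamma\beta$ in $\cate$, so $\gamma(\beta - \sigma\iota) = 0$ in $\cate$. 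Meanwhile $\sigma\iota$ factors through the injective $I(R)$, hence is zero in $\catt$, so $\beta - \sigma\iota = \beta$ in $\catt$. Thus $\delta := \beta - \sigma\iota$ satisfies both requirements: $\gamma\delta = 0$ in $\cate$ and $\delta = \beta$ in $\catt$.

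The main obstacle — and the reason hypothesis (iii) is present rather than a bare appeal to "$\alpha$ factors through a projective" — is that an arbitrary pro/injective $P$ through which $\alpha$ factors need not lie in $\catr$, so one could not directly run it through the precover. Replacing $P$ by $I(R)$ is what makes the argument go, and checking that $u$ really does extend along $R\hookrightarrow I(R)$ (injectivity of $P$ in the exact category $\cate$, applied to the inflation $R\hookrightarrow I(R)$) is the one point that needs a little care. Everything else is bookkeeping with the two factorizations and the fact that maps through pro/injectives vanish in $\catt$; I would keep the sign/composition conventions explicit to make sure $\delta$ lands on the correct side.
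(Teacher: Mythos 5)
Your proposal is correct and follows essentially the same route as the paper: factor $\alpha$ through the injective hull $I(R)$, use hypothesis (iii) and the precover property to produce $\sigma\colon I(R)\to R_0$, and correct $\beta$ by the composite $\sigma\iota$ (which vanishes in $\catt$ and satisfies $\gamma\sigma\iota=\alpha$ in $\cate$). The one place you go beyond the paper is in spelling out why $\alpha$ may be assumed to factor through $I(R)$ specifically (extending an arbitrary factorization through a pro/injective $P$ along the inflation $R\hookrightarrow I(R)$ using injectivity of $P$) — the paper asserts this without comment, and your extra care is warranted.
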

\begin{proof}
Since $\alpha=0$ in $\catt$ it factors as $R\to I(R) \to X$. The assumption that $I(R)$ is in $\catr$ is key as this  means that the map  $I(R)\to X$ factors through $R_0$. Set $\delta'$ to be the composite $R\to I(R)\to R_0$. This is the amount by which we need to correct $\beta$, thus we define $\delta:=\beta - \delta'$.
\end{proof}
We prove the next lemma by making one more assumption, and then we shall prove that the assumption is true.
\begin{lem}\label{lemthree}
Suppose that $\alpha$ is in the kernel of the map $(R,L_0)_\catt\to (R,X)_\catt$ and further assume that $\alpha$ factors through $R_0$ in $\catt$, then $\alpha=0$.
\end{lem}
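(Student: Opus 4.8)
The plan is to chase $\alpha$ back along the resolution to the precover $R_1$ of the first syzygy $K_1$, where it becomes forced through the map $\epsilon_1$, which is annihilated by the map $R_0\to L_0$. Throughout, write $r_0\colon R_0\to X$ for the precover map (the $\gamma$ of Lemma \ref{lemtwo}), $\iota_0\colon K_1\to R_0$ for its kernel in $\cate$, and $\bar\gamma\colon R_0\to L_0$ for the map completing $\epsilon_1\colon L_1\to R_0$ to a triangle in the final step of the construction. By hypothesis $\alpha=\bar\gamma\mu$ for some $\mu\colon R\to R_0$ in $\catt$, which I lift arbitrarily to a map of $\cate$; this is harmless since $(R,R_0)_\catt$ is a quotient of $\Hom_\cate(R,R_0)$. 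The last morphism of triangles in the construction exhibits the composite $R_0\to L_0\to X$ as $r_0$, so the assumption that $\alpha$ maps to $0$ in $(R,X)_\catt$ says precisely that $r_0\mu=0$ in $\catt$.

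Now I would feed the $\cate$-map $r_0\mu\colon R\to X$, with the factorization through the precover given by $\beta=\mu$, into Lemma \ref{lemtwo}: it returns a map $\delta\colon R\to R_0$ of $\cate$ with $\delta=\mu$ in $\catt$ and $r_0\delta=0$ \emph{in} $\cate$. Hence $\delta$ factors through the kernel in $\cate$, say $\delta=\iota_0\delta_1$ with $\delta_1\colon R\to K_1$. Since $R$ lies in $\catr$ and $R_1$ is a precover of $K_1$, the map $\delta_1$ factors in $\cate$ as $R\xrightarrow{\nu}R_1\xrightarrow{r_1}K_1$ with $r_1$ the precover map. By construction the resolution map $R_1\to R_0$, which in $\cate$ is the composite $\iota_0 r_1$, equals in $\catt$ the map $\epsilon_1$ precomposed with the structure map $R_1\to L_1$ of the triangle defining $L_1$ (and is simply $\epsilon_1$ when $d=1$, reading $L_1=R_1$). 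Therefore $\delta=\iota_0 r_1\nu$ factors through $\epsilon_1$ in $\catt$, whence $\alpha=\bar\gamma\mu=\bar\gamma\delta=0$ because two consecutive maps of a triangle compose to zero.

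The crux is the passage through $\cate$. The na\"{\i}ve obstruction to injectivity is exactly the presence of lifts $\mu\colon R\to R_0$ which fail to factor through $\epsilon_1$, and what controls them is Lemma \ref{lemtwo} --- and through it hypothesis (iii) --- which lets us replace $\mu$ by a map $\delta$ that honestly composes to zero into $X$ in $\cate$ and hence factors through $K_1$; once we are over the precover $R_1$ of $K_1$ the rest is formal. The two remaining identifications, that $R_0\to L_0\to X$ is $r_0$ and that the resolution map $R_1\to R_0$ is $\epsilon_1$ precomposed with $R_1\to L_1$, are read directly off the construction of $L_0$, modulo treating the case $d=1$ (in which $L_1$ does not explicitly appear) on its own.
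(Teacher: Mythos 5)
Your proof is correct and follows the same skeleton as the paper's: use Lemma \ref{lemtwo} to upgrade the $\catt$-triviality of the composite $R\to R_0\to X$ to $\cate$-triviality, factor the corrected map through the kernel $K_1$ of the precover $r_0\colon R_0\to X$, and then exhibit a factorization through $\epsilon_1\colon L_1\to R_0$, which is annihilated by the triangle map $R_0\to L_0$. The only difference is presentational: where the paper invokes an induction on $\catr$-dimension (in effect, Lemma \ref{lemone} applied to $K_1$) to assert that $R\to K_1$ factors through $L_1$, you unroll that step explicitly via the precover $R_1$ together with the identity $\iota_0 r_1=\epsilon_1\circ(R_1\to L_1)$ in $\catt$ read off the construction, which amounts to the same factorization.
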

\begin{proof}
This is best illustrated diagrammatically - it is a straightforward argument, but an algebraic proof would overload the reader with notation. We start with a diagram

\[\xymatrix@=15pt{                          &                   &          R\ar[d]^\alpha  & \\
			 L_1\ar[r] \ar[d]& R_0 \ar[r] \ar[d]& L_0 \ar[r]        \ar[d]             & \\
                                    0  \ar[r]   & X \ar@{=}[r] & X                  \ar[r]             & }
                                    \]
where the rows are distinguished triangles. Our extra assumption that $\alpha$ factors through $R_0$ allows us to create a commutative diagram
\[\xymatrix@=15pt{                          &                   &          R\ar[d]^\alpha \ar[dl]& \\
			 L_1\ar[r] \ar[d]& R_0 \ar[r] \ar[d]& L_0 \ar[r]  \ar[d]                   & \\
                                    0    \ar[r]  & X \ar@{=}[r] & X                   \ar[r]            & }\]
an elementary diagram chase shows that the map $R\to R_0\to X$ is zero in $\catt$. Thus we my invoke lemma \ref{lemtwo}, and deduce that we can choose it to be zero in $\cate$. Thus the map $R\to R_0$  must factor in $\cate$, and thus in $\catt$, through  $K_0$. By induction on the $\catr$-dimension (note the case of $\catr$-dimension 0 is trivially true), we can assume that any map from $R$ to $K_0$ factors (uniquely, but that is unimportant)  through $L_1$ in $\catt$. Thus we have a larger commutative diagram

\[\xymatrix@=15pt{ & & R\ar@/_1pc/[ddll] \ar[dd]\ar[dl]\ar[ddl] & \\
                          &           K_0    \ar[d]   &   & \\
			 L_1\ar[r] \ar[d]& R_0 \ar[r] \ar[d]& L_0 \ar[r]  \ar[d]                   & \\
                                    0    \ar[r]  & X \ar@{=}[r] & X                   \ar[r]            & }\]
and we see that $\alpha$ factors through two consecutive maps in a distinguished triangle, and must be $0$ in $\catt$.
\end{proof}

This leaves the extra assumption in lemma \ref{lemthree} to be proven. The proof again inducts on the $\catr$-dimension.

\begin{lem}\label{lemfour}
Any map from $R\to L_0$ factors through $R_0$ in $\catt$. 
\end{lem}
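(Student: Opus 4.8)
\textsc{Plan of proof.}
The plan is to run this lemma inside the induction on $\catr$-dimension that proves Theorem \ref{main}: while treating $X$, of $\catr$-dimension $d$, we may assume Theorem \ref{main} --- and every lemma preceding it --- for all objects of $\catr$-dimension smaller than $d$. For $d=0$ one has $L_0=R_0$ and there is nothing to do, so suppose $d\geq 1$. Recall the last triangle of the construction, $L_1\xrightarrow{\epsilon_1}R_0\xrightarrow{\nu}L_0\xrightarrow{\partial}\Sigma L_1$. A map $f\colon R\to L_0$ factors through $\nu$ exactly when $\partial f=0$; since $\catr$ is closed under shifts we may set $R'=\Sigma^{-1}R\in\catr$ and let $\phi\colon R'\to L_1$ be the map corresponding to $\partial f$ under $(R,\Sigma L_1)_\catt\cong(R',L_1)_\catt$, so it suffices to show $\phi=0$.

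First I would isolate the only genuinely new ingredient, a statement about the precover $\gamma\colon R_0\to X$ by itself: writing $\iota\colon K_1\hookrightarrow R_0$ for the inclusion of $K_1=\ker\gamma$ (which exists since $\cate$ has enough kernels), the map $\iota_*\colon(R,K_1)_\catt\to(R,R_0)_\catt$ is injective for every $R$ in $\catr$. To prove this, take $g\colon R\to K_1$ with $\iota g=0$ in $\catt$, lift $g$ arbitrarily to $\tilde g$ in $\cate$, and use that $\iota\tilde g$ is null in $\catt$, hence factors in $\cate$ through a projective--injective object and therefore, after extending along the injective-hull inclusion $\eta\colon R\rightarrowtail I(R)$, as $\iota\tilde g=\psi\eta$ with $\psi\colon I(R)\to R_0$; here $I(R)\in\catr$ by hypothesis (iii). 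Since $\gamma\iota=0$ we get $\gamma\psi\eta=0$, so $\gamma\psi$ factors as $\xi\bar\eta$ through the cokernel $\bar\eta\colon I(R)\twoheadrightarrow\Sigma R$ of $\eta$; being a shift of $R$, the object $\Sigma R$ lies in $\catr$ by hypothesis (ii), so $\xi=\gamma\xi'$ for some $\xi'\colon\Sigma R\to R_0$ by the precover property. Now $\gamma(\psi-\xi'\bar\eta)=0$, hence $\psi-\xi'\bar\eta=\iota\lambda$ for some $\lambda\colon I(R)\to K_1$, and precomposing with $\eta$ (using $\bar\eta\eta=0$) gives $\iota\tilde g=\iota\lambda\eta$; since $\iota$ is monic in $\cate$ this forces $\tilde g=\lambda\eta$, which factors through $I(R)\in\catp$, whence $g=0$ in $\catt$.

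Granting this, the argument concludes quickly. By construction $\epsilon_1=\iota\pi_1$, where $\pi_1\colon L_1\to K_1$ is the map extracted from the morphism of triangles at the final step; moreover $L_1$ is precisely the object that the construction produces from the induced resolution of $K_1$, and $\pi_1$ the corresponding canonical map, so Theorem \ref{main} applied to $K_1$ (of $\catr$-dimension $d-1$) says that $(\pi_1)_*\colon(R',L_1)_\catt\to(R',K_1)_\catt$ is an isomorphism. On the other hand, consecutivity in the triangle above gives $(\Sigma\epsilon_1)\circ\partial=0$, so $\epsilon_1\phi=\iota(\pi_1\phi)=0$ in $(R',R_0)_\catt$; the injectivity statement then yields $\pi_1\phi=0$, and the isomorphism $(\pi_1)_*$ yields $\phi=0$. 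Hence $\partial f=0$ and $f$ factors through $\nu$, as claimed.

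The step I expect to be the real obstacle is the injectivity of $\iota_*$: this is the one place where one must compensate for $K_1$ being merely a categorical kernel in $\cate$ rather than the fibre of $\gamma$ in $\catt$, and it is the only point in the argument that consumes hypotheses (ii) and (iii) --- (iii) to keep $I(R)$ inside $\catr$, and (ii) to keep the cosyzygy $\Sigma R$ inside $\catr$, both needed exactly so that the two comparison maps $\gamma\psi$ and $\xi$ can be routed through the precover. Everything else is rotation of triangles together with the inductive hypothesis (Theorem \ref{main} for the smaller-dimensional $K_1$), the injectivity argument itself being in the same spirit as Lemma \ref{lemtwo}.
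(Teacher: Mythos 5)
Your proof is correct and takes essentially the same approach as the paper: the whole argument hinges on the injectivity of $(R,K_1)_\catt \to (R,R_0)_\catt$ (which the paper writes, by a notational slip, with $K_0$ where the resolution notation would have $K_1$), proved with exactly the same deployment of hypotheses (ii) and (iii), combined with the inductive isomorphism $(\pi_1)_*$ coming from Theorem \ref{main} applied to the smaller-dimensional $K_1$. The only difference is packaging: the paper reaches the key injectivity via the octahedral axiom and a five-condition diagram chase through auxiliary objects $Y$ and $Z$, whereas your shift-based reduction through the single triangle $L_1 \to R_0 \to L_0 \to \Sigma L_1$ arrives at the same place more directly.
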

\begin{proof}
The statement is equivalent to the assertion that the natural map 
\[(R,R_0)_\catt \to (R,L_0)_\catt\] is surjective. To see this, we invoke the octahedral axiom to construct

\[\xymatrix@=15pt{ L_1 \ar[r]    \ar@{=}[d]& K_0 \ar[r] \ar[d] & Y  \ar[d]  \ar[r] &  \\
 L_1 \ar[r]^{\epsilon_1} \ar[d]  & R_0 \ar[r] \ar[d]  &   L_0  \ar[r]\ar[d] &  \\
 0\ar[r] & Z \ar@{=}[r] & Z\ar[r] & 0 }\]
where by induction on $\catr$-dimension again we have that $(R,Y)_\catt=0$. Apply the functor $(R,?)_\catt$ to obtain 
\[\xymatrix@=15pt{  & \ar[d]                                            & \ar[d]                                &  \ar[d]                                                      & \ar[d]                                                            &           \\
\ar[r]& (R,L_1)_\catt  \ar@{=}[r]    \ar@{=}[d]    & (R,K_0)_\catt  \ar[r] \ar[d] &0 \ar[d]  \ar[r] & (R,L_1[1])_\catt  \ar@{=}[d] \ar@{=}[r] & \\
\ar[r]&  (R,L_1 )_\catt \ar[r] \ar[d]  & (R,R_0)_\catt \ar[r] \ar[d]  &   (R,L_0)_\catt  \ar[r]\ar@{=}[d] & (R,L_1[1] )_\catt \ar[d] \ar[r] & \\
\ar[r] & 0\ar[r] \ar[d]& (R,Z )_\catt \ar@{=}[r]\ar[d]& (R,Z)_\catt\ar[d]  \ar[r] & 0\ar[d] \ar[r] &  \\
\ar[r]&(R,L_1[1])_\catt \ar@{=}[r]    \ar@{=}[d]    & (R,K_0[1])_\catt  \ar[r] \ar[d] &0 \ar[d]  \ar[r] & (R,L_1[2])_\catt  \ar@{=}[d] \ar@{=}[r] & \\
\ar[r]&  (R,L_0[1])_\catt \ar[r] \ar[d]  & (R,R_0[1])_\catt \ar[r] \ar[d]  &   (R,L_0[1])_\catt  \ar[r]\ar@{=}[d] & (R,L_1[2] )_\catt \ar[d] \ar[r] &  \\ 
 &&&&&}\]
where all rows and columns are exact.  There is a standard diagram chase to be done. The following statements are equivalent.

\begin{enumerate}[(i)]
\item $(R,R_0)_\catt\to (R,L_0)_\catt$ is surjective.
\item $(R,R_0)_\catt \to (R,Z)_\catt$ is surjective.
\item $(R,Z)_\catt\to(R,K_0[1])_\catt$ is zero.
\item $(R,K_0[1])_\catt \to (R,R_0[1])_\catt$ is injective. 
\item $(R,K_0)_\catt\to (R,R_0)_\catt$ is injective.
\end{enumerate}
To prove the last of these conditions, we work in $\cate$. Suppose that $R\to K_0\to R_0$ is zero, then there is a diagram in $\cate$ 

\[\xymatrix@=15pt{ R\ar[r]\ar[d] & I(R)\ar[r]& R[1] \\
                     K_0 \ar[r]& R_0\ar[r] & X }\]
and the top row is a conflation. By assumption there is a map $I(R)\to R_0$ which makes

\[\xymatrix@=15pt{ R\ar[r]\ar[d] & I(R) \ar[d]\ar[r]& R[1] \\
                     K_0 \ar[r]& R_0 \ar[r]& X }\]
commutive. This can be completed to
\[\xymatrix@=15pt{ R\ar[r]\ar[d] & I(R) \ar[d]\ar[r]& R[1]\ar[d] \\
                     K_0 \ar[r]& R_0 \ar[r]& X }\]
since $I(R)\to R[1]$ is a cokernel of $R\to I(R)$. There is one hypothesis yet to be used, which we now invoke. As $\catr$ is closed under shifts, the map from $R[1]\to X$ factors through $R_0$, which implies that the map  $I(R) \to R_0$ factors through $K_0$, hence $R \to K_0$ factors through $I(R)$ as we were required to show.
 \end{proof}

Lemmas \ref{lemone}, \ref{lemtwo}, \ref{lemthree} and \ref{lemfour} complete the proof of the main theorem.

\section{Constructing adjoints}

Suppose that $\cate$, $\catt$, and $\catr$ satisfy the hypotheses of theorem \ref{main}. We can use this to deduce the existence of adjoints to certain inclusion functors via a routine  argument.

\begin{thm}\label{full} Suppose that every object in $\catt$ has finite $\catr$-dimension. Let $\cats$ be the smallest full localizing subcategory of $\catt$ that contains $\catr$, then the inclusion functor 
\[ i: \cats\to \catt\]
has a right adjoint $i^!$.
\end{thm}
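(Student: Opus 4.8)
The plan is to reduce the statement to the standard criterion for coreflectivity of a triangulated subcategory: the inclusion $i\colon\cats\to\catt$ has a right adjoint if and only if, for every $X\in\catt$, there is a distinguished triangle $S_X\to X\to X'\to$ with $S_X\in\cats$ and $(\cats,X')_\catt=0$. For the direction one actually needs, I would set $i^!X:=S_X$ and check that applying $(T,-)_\catt$ for $T\in\cats$ to (a rotation of) this triangle turns the counit $S_X\to X$ into an isomorphism $(T,S_X)_\catt\to(T,X)_\catt$ — using that both $T$ and $T[1]$ lie in $\cats$, so that $(T,X')_\catt=(T[1],X')_\catt=0$ — and that this determines $S_X$ up to canonical isomorphism, giving functoriality and naturality of the adjunction in the usual formal way. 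So the real content is to produce, for each $X$, a triangle of this shape, and the candidate for $S_X$ is the object $L_0$ of the finite-resolution construction.

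Given $X\in\catt$, I would first note that since $\catt=\cate/\catp$ has the same objects as $\cate$, $X$ is an object of $\cate$; by hypothesis it has finite $\catr$-dimension $d$, so the construction of $L_0$ applies and produces, at its last stage, a morphism $\ell\colon L_0\to X$ in $\catt$ sitting in a morphism of triangles over $0\to X\xrightarrow{=}X\to$. Two facts are then needed. First, $L_0\in\cats$: by Remark (ii) accompanying that construction, $L_0$ lies in the smallest triangulated subcategory of $\catt$ closed under triangles and containing $R_0,\dots,R_d$, and each $R_i$ is in $\catr\subseteq\cats$ with $\cats$ localizing, hence closed under triangles. Second, the isomorphism asserted by Theorem \ref{main} is induced precisely by $\ell$: inspecting the proof of Lemma \ref{lemone}, the surjection considered there is exactly $\ell_*\colon(R,L_0)_\catt\to(R,X)_\catt$, and the injectivity half of Theorem \ref{main} refers to the same map.

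Next I would complete $\ell$ to a triangle $L_0\xrightarrow{\ell}X\to C\to$ and take $X'=C$. For $R\in\catr$ the long exact sequence of $(R,-)_\catt$ reads
\[(R,L_0)_\catt\to(R,X)_\catt\to(R,C)_\catt\to(R,L_0[1])_\catt\to(R,X[1])_\catt,\]
where $\ell_*$ is an isomorphism by the previous paragraph and $(R,L_0[1])_\catt\to(R,X[1])_\catt$ is an isomorphism too, since it coincides with $(R[-1],L_0)_\catt\to(R[-1],X)_\catt$ and $R[-1]\in\catr$ by hypothesis (ii), so Theorem \ref{main} applies once more. Hence $(R,C)_\catt=0$ for all $R\in\catr$, and since $\catr$ is shift-closed this upgrades to $(R,C[n])_\catt=(R[-n],C)_\catt=0$ for all $n\in\mathbb{Z}$. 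Finally, the full subcategory $\{\,T\in\catt:(T,C[n])_\catt=0\text{ for all }n\in\mathbb{Z}\,\}$ is closed under shifts, triangles, retracts, and coproducts, hence is a localizing subcategory, and it contains $\catr$; therefore it contains $\cats$. In particular $(\cats,C)_\catt=0$, the triangle $L_0\to X\to C\to$ is of the required form, and one obtains $i^!X=L_0$ — an explicit description, consistent with the emphasis of the paper.

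Everything here is routine once Theorem \ref{main} is available; the single point I expect to cost genuine care is the second fact in the second paragraph — identifying the abstract isomorphism of Theorem \ref{main} with $\ell_*$ for the concrete morphism $\ell\colon L_0\to X$ coming out of the construction. That identification is exactly what licenses forming the triangle on $\ell$ and running the long exact sequence; the rest (the second appeal to shift-closure to pass from surjectivity of $\ell_*$ to the vanishing $(R,C)_\catt=0$, and the observation that orthogonality to all shifts of a single object cuts out a localizing subcategory) is purely formal.
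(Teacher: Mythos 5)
Your proof is correct, and it is a slight repackaging of the paper's argument rather than something fundamentally different, but the repackaging does real work. The paper defines $i^!(X):=L_0$ and then argues that the class of objects $S$ for which $(S,X)_\catt\cong(S,L_0)_\catt$ contains $\catr$ and is closed under direct sums and triangles, hence contains $\cats$. This is fine in outline, but the ``closed under triangles'' step silently presupposes that the isomorphism is \emph{natural} in $S$, i.e.\ induced by one fixed morphism $L_0\to X$, so that the five lemma can be applied along a triangle in $\cats$; the paper never pins this down. You instead isolate the morphism $\ell\colon L_0\to X$ coming out of the construction, observe that the isomorphism of Theorem~\ref{main} is precisely $\ell_*$, form the cone $C$, deduce $(\catr,C)_\catt=0$ from the long exact sequence together with shift-closure of $\catr$, and then propagate to $\cats$ by noting that the left orthogonal of $C$ is localizing. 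Finally you invoke the standard ``localization triangle implies right adjoint'' criterion. That is the same propagation-from-$\catr$-to-$\cats$ idea as in the paper, but routed through the Bousfield triangle $L_0\to X\to C\to$ rather than directly through corepresentability; the benefit is that the naturality issue and the membership $L_0\in\cats$ are both handled explicitly, whereas the paper leaves them implicit (the latter via Remark~(ii) of the construction). Both proofs leave functoriality of $X\mapsto L_0$ and naturality of the adjunction to the usual formal argument, which you at least flag.
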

 \begin{proof}
 Given $X$ in $\catt$ we may construct $L_0$ as above. Define $i^!(X):=L_0$.  Notice that for $R\in \catr$ theorem \ref{main} gives isomorphisms. 
 
 \[(i(R),X)_\catt \cong (R,L_0)_\catt \cong (R,L_0)_\cats\cong (R,i^!(X))_\cats\]
 Thus we just need to argue that these isomorphisms exist if we replace $R$ with an arbitrary object $S$ in $\cats$. It is clear that the class of objects for which the isomorphisms exist contains $\catr$ and is closed under direct sums and triangles, hence contains $\cats$, and we are done.
\end{proof} 
\begin{cor}
If the quotient  $j:\catt \to \catt/\cats$ exists, then it has a right adjoint, $j_!$.
\end{cor}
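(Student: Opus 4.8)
The plan is to realize $j_!$ as the composite of $i^!$ from Theorem~\ref{full} with the quotient functor, exploiting the standard Bousfield-localization formalism. Recall that when the Verdier quotient $j\colon\catt\to\catt/\cats$ exists, $j$ admits a right adjoint precisely when every object $X$ of $\catt$ fits into a triangle $X'\to X\to X''\to$ with $X'\in\cats$ and $X''\in\cats^\perp$ (i.e. $(\cats,X'')_\catt=0$); in that case $j_!(jX)=X''$, and $X'\to X\to X''$ is exactly the localization triangle. So the proof reduces to producing this triangle functorially for every $X$.

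First I would take $X\in\catt$, form $L_0=i^!(X)$ as in Theorem~\ref{full}, and use the counit $i^!(X)\to X$ coming from the adjunction $i\dashv i^!$; complete it to a triangle $i^!(X)\to X\to X_{\cats^\perp}\to$. By Theorem~\ref{full}, $i^!(X)\in\cats$, so it remains to check $(\cats,X_{\cats^\perp})_\catt=0$. Applying $(S,-)_\catt$ for $S\in\cats$ to the triangle and using the long exact sequence, this follows once the map $(S,i^!(X))_\catt\to(S,X)_\catt$ is an isomorphism for all $S\in\cats$ — but this is precisely the content of the isomorphism established in the proof of Theorem~\ref{full}, extended from $\catr$ to all of $\cats$ by closure under sums and triangles. (One also needs that $\cats$ is closed under shifts so the long exact sequence argument closes up; this is automatic for a localizing subcategory.) Hence $X_{\cats^\perp}\in\cats^\perp$, and we have the localization triangle.

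Next I would promote this to a functor and verify the adjunction. Functoriality of $X\mapsto X_{\cats^\perp}$ follows from the usual uniqueness argument: given $f\colon X\to Y$, the isomorphism $(\cats,-)_\catt$-property forces a unique compatible map $X_{\cats^\perp}\to Y_{\cats^\perp}$ by the standard "no maps from $\cats$ into $\cats^\perp$" rigidity, so $L\colon\catt\to\catt$, $L(X)=X_{\cats^\perp}$, is a well-defined functor killing $\cats$ and therefore factors through $j$ as $L=j_!\circ j$ for a unique functor $j_!\colon\catt/\cats\to\catt$. The adjunction isomorphism $(jX,jY)_{\catt/\cats}\cong(X,j_!jY)_\catt=(X,Y_{\cats^\perp})_\catt$ is then the classical computation: morphisms in the Verdier quotient out of $jX$ are computed by the triangle $Y_\cats\to Y\to Y_{\cats^\perp}$, and since $(\cats,Y_{\cats^\perp})_\catt=0$ and $Y_\cats\in\cats$, the map $(X,Y_{\cats^\perp})_\catt\to(jX,jY_{\cats^\perp})_{\catt/\cats}\to(jX,jY)_{\catt/\cats}$ is an isomorphism.

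The main obstacle I anticipate is not any single hard step but rather assembling the Bousfield-localization package cleanly: one must be careful that the localization triangle produced from $i^!$ genuinely has its third term in $\cats^\perp$ (this rests entirely on the extended isomorphism $(\cats,i^!X)_\catt\cong(\cats,X)_\catt$, which is why Theorem~\ref{full}'s sum-and-triangle closure argument is doing the real work), and that functoriality is handled by uniqueness rather than by any fresh construction. Everything else is the standard dictionary between "the quotient admits a right adjoint", "Bousfield localization with respect to $\cats$ exists", and "every object has a localization triangle", which I would cite from \cite{hovey} rather than reprove.
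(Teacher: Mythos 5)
Your proposal is correct and takes essentially the same route as the paper: the paper's own proof is the one-liner "define $j_!(X)$ to be the third object in the triangle $L_0 \to X \to Y$," and what you have written is precisely the justification that this works — namely that Theorem~\ref{full} puts $L_0 \in \cats$ and makes $(\cats, L_0)_\catt \to (\cats, X)_\catt$ an isomorphism, so $Y \in \cats^\perp$ and the standard Bousfield-localization dictionary applies. You have simply supplied the verification the paper leaves implicit.
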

\begin{proof}
Define $j_!(X)$ to be the third object in the triangle
\[ L_0\to X \to Y \]
\end{proof}
There is clearly a theorem to be stated if we drop the assumption on finite $\catr$-dimension of all objects.
\begin{thm}\label{partial}
Suppose that $\cate, \catt$, and $\catr$ are as in theorem \ref{main}. Let $\cats$ be the smallest localizing subcategory containing $\catr$. If the inclusion 

\[ i :\cats\to \catt \]
has a left adjoint, $i^!$, and if $X$ has finite $\catr$ dimension, then necessarily $i^!(X)$ is isomorphic to the $L_0$ coming from our construction.
\end{thm}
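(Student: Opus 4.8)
The plan is to pin down $i^!(X)$ by its universal property and then verify that $L_0$, equipped with the structure map $\phi\colon L_0\to X$ produced by the construction, satisfies that same property; throughout I read $i^!$ as the right adjoint of $i$, in keeping with Theorem \ref{full}. Since $X$ has finite $\catr$-dimension, the object $L_0$ and the final map $\phi\colon L_0\to X$ of the tower are available, and Theorem \ref{main} applies. First I would record two preliminaries. (a) Because $i$ is fully faithful, for every $S$ in $\cats$ the counit $\varepsilon_X\colon ii^!(X)\to X$ induces an isomorphism $(S,ii^!(X))_\catt\xrightarrow{\varepsilon_X\circ-}(S,X)_\catt$ --- this is just the adjunction isomorphism $(S,i^!(X))_\cats\cong(iS,X)_\catt$ combined with full faithfulness. (b) $L_0$ lies in $\cats$: by Remark (ii) after the construction it belongs to the smallest triangulated subcategory of $\catt$ containing $R_0,\dots,R_d$, all of which lie in $\catr\subseteq\cats$, and $\cats$ is triangulated. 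Transposing $\phi$ along the adjunction (legitimate since $L_0\in\cats$) yields a canonical map $\psi\colon L_0\to i^!(X)$ with $\varepsilon_X\circ i(\psi)=\phi$, and the theorem amounts to: $\psi$ is an isomorphism.

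Next I would reduce this to a vanishing statement. Complete $\phi$ to a triangle
\[ L_0\xrightarrow{\ \phi\ } X\to Y\to L_0[1] \]
in $\catt$. By Yoneda (applied in $\cats$, equivalently in $\catt$ since $i$ is fully faithful and $L_0,i^!(X)\in\cats$), $\psi$ is an isomorphism as soon as $(S,L_0)_\catt\xrightarrow{\psi\circ-}(S,i^!(X))_\catt$ is bijective for every $S\in\cats$. Composing with the isomorphism of (a) and using $\varepsilon_X\circ i(\psi)=\phi$, this map is identified with $(S,L_0)_\catt\xrightarrow{\phi\circ-}(S,X)_\catt$; and because $\cats$ is closed under shifts, the long exact sequence obtained by applying $(S,-)_\catt$ to the triangle above shows that $\phi\circ-$ is bijective for all $S\in\cats$ precisely when $(S,Y)_\catt=0$ for all $S\in\cats$.

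The final step is to prove $(S,Y)_\catt=0$ for every $S\in\cats$. The class $\mathcal D=\{S\in\catt:(S,Y)_\catt=0\}$ is closed under coproducts, shifts, retracts and triangles, hence is a localizing subcategory of $\catt$; since $\cats$ is by definition the smallest localizing subcategory containing $\catr$, it suffices to check $\catr\subseteq\mathcal D$. Fix $R\in\catr$. Hypothesis (ii) of Theorem \ref{main} gives that every shift $R[n]$ is again in $\catr$, so Theorem \ref{main} (applied to our $X$, with $R[n]$ as the test object) says $(R[n],L_0)_\catt\to(R[n],X)_\catt$ is an isomorphism for all integers $n$; and by the construction of $L_0$ together with Lemma \ref{lemone} this isomorphism is postcomposition with $\phi$. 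These are exactly the maps occurring in the long exact sequence obtained by applying $(R,-)_\catt$ to $L_0\to X\to Y\to$, so every term $(R,Y[n])_\catt$ vanishes; in particular $R\in\mathcal D$, whence $\cats\subseteq\mathcal D$ and we are done.

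The one place that demands care --- the ``hard part'', such as it is --- is the variance and naturality bookkeeping in the last step: one must be sure that the isomorphism furnished by Theorem \ref{main} is genuinely $\phi\circ-$, so that it can be slotted into the long exact sequence to force $(R,Y)_\catt=0$, rather than merely that $(R,L_0)_\catt$ and $(R,X)_\catt$ are abstractly isomorphic, from which nothing would follow. Granting that, every remaining step is a formal diagram chase or a standard closure property of localizing subcategories. (Should $i^!$ instead be intended as a genuine left adjoint, the same template applies after dualising, but the statement is most natural for the right adjoint of Theorem \ref{full}.)
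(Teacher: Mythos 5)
The paper states Theorem \ref{partial} without giving a proof, so there is no argument to compare against; the burden falls entirely on you, and you discharge it. Your proof is correct and is the natural one: you observe that $L_0$ lies in $\cats$, transpose $\phi\colon L_0\to X$ to $\psi\colon L_0\to i^!(X)$, and reduce ``$\psi$ is an isomorphism'' to the vanishing of $(S,Y)_\catt$ for $S\in\cats$ (where $Y$ is the cone of $\phi$), then verify this on $\catr$ using Theorem \ref{main} and propagate to $\cats$ by the standard closure properties of localizing subcategories --- which is exactly the style of argument the paper uses to prove Theorem \ref{full}. You also correctly read ``left adjoint'' as a slip for ``right adjoint'' (consistent with the notation $i^!$ and with Theorem \ref{full}), and you were right to flag as the delicate point that the isomorphism of Theorem \ref{main} must be identified with $\phi\circ-$ rather than being merely abstract; Lemma \ref{lemone} and its proof make that identification explicit, so the long exact sequence argument goes through.
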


\section{Applications in relative homological algebra}
We should attempt to convince the reader that the assumptions we have placed on $\cate$ are really the kinds of things she might meet in everyday mathematics. The original motivation for this construction came from modular representation theory.

Let $A$ be a finite dimensional algebra over a field $k$. Suppose that $\catp$ is some class of objects in $\mathrm{Mod}(A)$ such that the quotient

\[\frac{\mathrm{Mod}(A)}{\catp}\]
is triangulated. It is natural to ask how the (isomorphism classes of) finite dimensional objects behave inside this category. Thus we define 

\[ \catr:= \mathrm{Add}(\mathrm{mod}(A))\]
as the class of pure projective objects. This is precovering. The smallest triangulated subcategory containing $\catr$ coincides with the smallest triangulated subcategory containing the finite dimensional objects that is closed under direct sums. Again, we shall use $\cats$ to denote this triangle closure of $\catr$. We can now cite some examples of uses of the localization theorems.

\begin{exmp}
If every module has finite pure projective dimension, then the inclusion  $\cats\to \catt$ has a right adjoint by theorem \ref{full}.
\end{exmp}

\begin{exmp} If the inclusion has an adjoint, then we can calculate it using theorem \ref{partial} for those objects with finite pure projective dimension.
\end{exmp}

Examples of finite pure projective dimension abound, as we now explain. The precise implications depend on one's set theory.
\begin{enumerate}
\item If the cardinality of the underling set of $A$ is $\aleph_t$ for some finite ordinal $t$, then every module has pure projective dimension at most $t+1$. If the cardinality is finite then every module has pure projective dimension 1.
\item For an algebra $A$ over any  field $k$, if the $k$-dimension of a module $M$ is $\aleph_t$, then $M$ has pure projective dimension at most $t+1$.
\end{enumerate}
The reader is referred to \cite{homotopy}[(3.8)] for more examples.  We will finish by fleshing out the bones of these examples. As we have mentioned the original motivation comes from modular representation theory.

Suppose that $G$ is a finite group, $k$ a field of characteristic $p$, and that $p$ divides $|G|$. Let $H$ be a subgroup of $G$. Define $\catp$ to be the class of summands of all modules induced up from $H$. It is now classical that $\Mod/\catp$, the relatively stable category, is triangulated. The triangles correspond to short exact sequences that split on restriction to $H$. One recovers the normal notion of projective, and the usual stable category $\mathrm{StMod}(kG)$, by letting $H$ be the trivial subgroup. 

One can define Rickard modules in $\Mod/\catp$ given a compactly generated subcategory, however the relative case is fundamentally different from the usual stable module category. It is a simple exercise to show that the smallest triangulated subcategory of $\mathrm{StMod}(kG)$ that contains the simple modules and is closed under direct sums is $\mathrm{StMod}(kG)$. It is known, \cite{thesis}[CH.~7 and App.~B] that the finite dimensional objects may (compactly) generate a proper subcategory of the relatively stable category, and thus yield a non-trivial localization functor.  Our construction gives an explicit description of it, under certain conditions: 
\begin{exmp} If $|k|=\aleph_t$, then all modules have finite pure projective dimension, since $|kG|=\aleph_t$.  This is no real restriction, since there are countable algebraically closed fields, and this is more than sufficient for modular representation theory. 
\end{exmp}
\begin{exmp}If $k$ is arbitrary, then one can apply theorem \ref{partial} since the inclusion has an adjoint, as we have observed.
\end{exmp}

 \section{Acknowledgements}
  I would like to thank Peter J{\o}rgensen for commenting on the general case, and in particular for correcting an earlier erroneous proof of lemma \ref{lemfour}, as well as explaining the content of \cite{homotopy} and its implications for the construction presented here.

\begin{small}

\end{small}
\end{document}